\documentclass[12pt]{amsart}
\usepackage[T1]{fontenc}
\usepackage{amsthm}
\usepackage{amsfonts}
\usepackage{amsmath}
\usepackage{amssymb}
\usepackage{enumitem}
\usepackage{fullpage}
\usepackage[style=alphabetic,sorting=nty, minalphanames=4, maxalphanames=4]{biblatex}

\usepackage{etoolbox}
\usepackage{amsmath}
\usepackage{mathtools}
\usepackage{bbm}
\AtBeginEnvironment{align}{\setcounter{equation}{0}}

\newcommand{\R}{\mathbb{R}}

\newcommand{\N}{\mathbb{N}}

\newcommand{\Hd}{\mathcal{H}}

\DeclareMathOperator{\gr}{Gr}
\DeclareMathOperator{\diam}{diam}

\DeclareMathOperator{\Gr}{Gr}

\newcommand{\e}{\varepsilon}
\newcommand{\de}{\delta}



\newtheoremstyle{lemmastyle}
{1em} 
{1em} 
{\slshape} 
{} 
{\bfseries} 
{.} 
{1em} 
{} 

\usepackage{thmtools}

\declaretheoremstyle[%
  spaceabove=0pt,%
  spacebelow=0pt,%
  headfont=\normalfont\itshape,%
  postheadspace=1em,%
  qed=\qedsymbol%
]{mystyle}

\theoremstyle{definition}
\newtheorem*{definition}{Definition}
\newtheorem*{remark}{Remark}
\newtheorem{theorem}{Theorem}
\newtheorem{proposition}{Proposition}[section]
\newtheorem{lemma}[proposition]{Lemma}

\numberwithin{theorem}{section}
\newtheorem*{theorem*}{Theorem}

\addbibresource{Project.bib}

\begin{document}
\title{Failure of weak-type endpoint restriction estimates for quadratic manifolds.}
\author{Sam Craig}
\begin{abstract} 

    It is well-known that the Fourier extension operator for the paraboloid in $\R^d$ cannot be weak-type bounded at the restriction endpoint $q = 2d/(d-1)$, since such an estimate would imply bounds for the Kakeya maximal function which contradict the existence of Besicovitch sets. We generalize this approach to prove that the Fourier extension operator for an $n$-dimensional quadratic manifold $\mathcal{M}$ cannot be weak-type bounded at the restriction endpoint. The key step in this proof is constructing a set $K \subset \R^d$ containing a translate of every plane normal to $\mathcal{M}$ which can be covered by $\lesssim \de^{-d}\left(\frac{\log \log (1/\de)}{\log (1/\de)}\right)^{n/(d-n)}$ many $\de$-balls. Such a set rules out endpoint bounds for the associated Kakeya maximal function and hence weak-type endpoint estimates for the restriction operator. 
\end{abstract}
\maketitle

    \section{Introduction}
    
    Let $F_1(\xi), \dots, F_{d-n}(\xi)$ be quadratic homogeneous polynomials on $\R^n$. Their graphs over an open ball $\Omega \subset \R^n$ define a quadratic manifold $\mathcal{M} \subset \R^d$. For such manifolds, we can define the \textit{Fourier extension operator} $\mathcal{E}^{\mathcal{M}}$ mapping Schwartz functions on $\Omega$ to Schwartz functions on $\R^{d}$ by \[\mathcal{E}^{\mathcal{M}}f(x) = \int_{\Omega} e^{2 \pi i x\cdot (\xi, F_1(\xi), \dots, F_{d-n}(\xi))} f(\xi)~d\xi.\]The Fourier restriction problem for $\mathcal{M}$ is to determine the range of $p$ and $q$ for which \[||\mathcal{E}^{\mathcal{M}}f||_{L^q(\R^{d})} \lesssim ||f||_{L^p(\Omega)}.\]
    By applying the extension operator to bump functions supported on small balls in $\Omega$, it is easy to see that $\mathcal{E}^{\mathcal{M}}$ cannot be bounded if $q \le \frac{2d}{n}$. However, this example does not rule out a weak-type bound for the extension operator at $q = \frac{2d}{n}$. The main result in this paper is that the extension operator does not satisfy restricted weak-type bounds at $q = \frac{2d}{n}$. 
    \begin{theorem}\label{thm:main}
        If $\mathcal{M}$ is a $n$-dimensional quadratic manifold in $\R^{d}$, the operator $\mathcal{E}^{\mathcal{M}}$, initially defined on Schwartz functions, does not extend to a restricted weak-type $(p, 2d/n)$ bound. 
    \end{theorem}
    
    This generalizes a 1998 paper by Beckner, Carbery, Semmes, and Soria. They prove that if the extension operator for the paraboloid is bounded $L^{p} \to L^{2d/(d-1), \infty}$, then standard Kakeya sets must have strictly positive measure, contradicting the existence of measure zero Kakeya sets~\cite{BCSS89}. We slightly modify their approach by proving that restricted weak-type endpoint bounds for $\mathcal{E}^{\mathcal{M}}$ imply restricted weak-type bounds for an associated generalization of the Kakeya maximal function, then proving that such bounds cannot hold. In particular, we define the Kakeya maximal function $K^{\mathcal{N}}_{\de}:L^1_{\text{loc}}(\R^d) \to L^1(\mathcal{N}, \mathcal{H}_{\Gr(d-n,d)}^n)$ associated with $\mathcal{N}$ by \[K^{\mathcal{N}}_{\de}(f)(P) = \sup_{a \in \R^d} \de^{-n} \int_{P_{\de}(a)} |f|(x)~dx,\]where $P_{\de}(a)$ is the intersection of the $\de$-neighborhood of $P$ with the unit ball, translated by $a$. We say $K^{\mathcal{N}}_{\de}$ satisfies restricted weak-type $(p,q)$ bounds with constant $C$ if it is bounded $L^{p,1}(\R^d) \to L^{q, \infty}(\mathcal{N}, \mathcal{H}_{\Gr(d-n,d)}^n)$ with constant $C$. We prove that $K^{\mathcal{N}}_{\de}$ does not satisfy restricted weak-type $(p,q)$ bounds with constant independent of $\de$.
    
    \begin{theorem}\label{thm:kmf}
        For any $p \in [1, \infty)$, any $q \in [1, \infty]$, and any $\de \in (0, 1)$, the Kakeya maximal function $K_{\de}$ associated with the $\mathcal{N} \subset \text{Gr}(d, d-n)$ of normal planes to $\mathcal{M}$ is does not satisfy restricted weak-type $(p,q)$ bounds with constant $\ll \left(\frac{\log 1/\de}{\log \log  1/\de}\right)^{n/(p(d-n))}$.

    \end{theorem}

    We also prove a novel existence result for measure zero generalized Kakeya sets coming from families of planes. Specifically, for $\mathcal{N} \subset \Gr(d-n, d)$, we define an $\mathcal{N}$-Kakeya set to be a compact set $E$ such that for each $P \in \mathcal{N}$, there exists $a \in \R^d$ and a unit ball $B \subset P$ such that $a + B \subset E$. We prove that if $\dim(\mathcal{N}) = n$, then there exist $\mathcal{N}$-Kakeya sets of Lebesgue measure $0$, and that the same holds for lower-dimensional $\mathcal{N}$-Kakeya sets with lower dimensional Hausdorff measures taking the place of the Lebesgue measure.

    \begin{theorem}\label{thm:hkak}
        For any $\mathcal{N} \subset \gr(d-n,d)$ with $\dim_H(\mathcal{N}) := s \in (0,n]$, there exists an $\mathcal{N}$-Kakeya set $E$ with $\mathcal{H}^{s + d-n}(E) = 0$, where $\mathcal{H}^{s+d-n}$ denotes the $s+d-n$-dimensional Hausdorff measure.
    \end{theorem}

    \section{Background and proof outline}

    Restriction estimates for quadratic manifolds are most commonly studied in the codimension one case, in particular when $\mathcal{M}$ is a paraboloid or sphere. While the higher codimension case is also an area of active research, it is less well-understood. The current best restriction estimates for quadratic manifolds appear in a recent paper of Gan, Guth, and Oh~\cite{GGO23}. These estimates are complicated to state in general, but for certain $n$-dimensional quadratic manifolds of codimension two, they prove $L^p \to L^p$ restriction estimates when $p \ge  \frac{2(n+2) + 2}{n} + O(n^{-2}).$ It is known that many quadratic manifolds will have restriction endpoints above the dimensionally optimal restriction endpoint $\frac{2d}{n}$. In the case codimension one or two, there are conjectures classifying which quadratic manifolds should satisfy the dimensionally optimal restriction estimates,

    While we can rule out weak-type endpoint results for restriction to quadratic manifolds, the same cannot be said for restriction operators in general. In particular, a 2009 paper by Bak, Oberlin, and Seeger proves endpoint restriction estimates for the moment curve in $\ge 3$ dimensions \cite{BOS09}. Outside of certain curves, hypersurfaces, and quadratic manifolds, little is known about when weak-type endpoint restriction estimates may hold or fail. 

    As in the case of restriction, Kakeya maximal functions are most commonly studied in the codimension one case corresponding to families of line segments with directions in $S^{d-1}$, in which case they imply lower bounds for the dimension of Kakeya sets in $\R^d$. The generalized Kakeya maximal functions defined above have also been studied in the case where $\mathcal{N} = \text{Gr}(d-n, d)$, in which case they imply lower bounds for the measures of sets in $\R^d$ containing a translated $d-n$ plane in every direction \cite{O05, B91}. 

    Most standard Kakeya set constructions come from constructing a Kakeya set in $\R^2$, for example using the Perron tree construction, then taking products to arrive at higher dimensional analogs. For certain choices of direction set $\mathcal{N}$, such an approach yields measure zero $\mathcal{N}$-Kakeya sets, but this cannot apply for general choices of $\mathcal{N}$. In 1999, Keich reviewed several techniques to construct standard Kakeya sets, including the Perron tree construction \cite{K99}. Our approach for Theorems \ref{thm:kmf} and \ref{thm:hkak} was inspired by a 2003 paper of K\"orner constructing Kakeya sets in $\R^2$ using the Baire category theorem \cite{K03}. 

    \subsection*{Proof outline}

    In Section \ref{sec:kmf}, we prove Theorem \ref{thm:kmf}. The key step in this proof is constructing an $\mathcal{N}$-Kakeya set in $[-2,2]^d$ which can be covered by $\lesssim \de^{-d}\left(\frac{\log \log 1/\de}{\log 1/\de}\right)^{n/(d-n)}$ many $\de$-balls. To do so, we assume $\mathcal{N}$ is contained in a small ball of fixed size around the plane spanned by the first $d-n$ coordinate directions. We prove that for any $\mathcal{N}$-Kakeya set $K$ and any axis-parallel rectangle $A$ short in the first $d-n$ directions and long in the remaining $n$ directions, we can perturb $K$ slightly to find another $\mathcal{N}$-Kakeya set $K_1$ with small measure in $A$. Repeating this process for a cover of $[-2,2]^d$ consisting of $m$ different choices of $A$ results in an $\mathcal{N}$-Kakeya $K_m$ which is a small perturbation from being small in each rectangle $A$. With some book-keeping, we conclude that $K_m$ can be covered by $\de^{-d}\left(\frac{\log \log 1/\de}{\log 1/\de}\right)^{n/(d-n)}$ many $\de$-balls. We make this argument rigorous in the proof of Proposition \ref{prop:propKak}. The proof of Theorem \ref{thm:kmf} follows easily from this construction.

    In Section \ref{sec:main}, we deduce Theorem \ref{thm:main} from Theorem \ref{thm:kmf}. We use standard reductions to prove that bounds for the restriction operator associated to a manifold $\mathcal{M}$ imply bounds for the Kakeya maximal function $\mathcal{K}^{\mathcal{N}}_{\de}f$ associated with the set $\mathcal{N} \subset \text{Gr}(d-n,d)$ of normal planes to $\mathcal{M}$.

    In Section \ref{sec:hkak}, we prove Theorem \ref{thm:hkak} using topological methods similar to those used by K\"orner for standard Kakeya sets. We equip the collection $\mathcal{P}$ of all $\mathcal{N}$-Kakeya sets with the Hausdorff metric, forming a complete metric space. We define a countable collection of open dense sets in $\mathcal{P}$ whose intersection consists only of measure zero sets, then use the Baire category theorem to complete the proof.

    \subsection*{Acknowledgements}
    The author thanks his advisor, Betsy Stovall, for suggesting this problem and for her advice and support. The author also thanks Marianna Csörnyei for introducing him to K\"orner's paper. The author was supported during this project by the National Science Foundation under grant numbers DMS-2037851 and DMS-2246906. The author thank the anonymous reviewer for their suggestions on the manuscript.

    \subsection*{Notation}
    \begin{itemize}
        \item In the remainder of the paper, the choice of manifold $\mathcal{M}$ is fixed, so we denote $\mathcal{E}^{M}$ by $\mathcal{E}$.
        \item If there exists a constant $C$ depending only on $d, \mathcal{M}$ such that $A \le CB$, we write $A \lesssim B$. If the constant also depends on a collection of parameters $p_1, \dots, p_m$, we write $A \lesssim_{p_1, \dots, p_m} B$. If $A \lesssim B$ and $B \lesssim A$, then we write $A \approx B$. We may treat approximate equality as exact equality when doing so does not introduce confusion, in particular for the dimensions of plates.
        \item We denote the Grassmannian of $n$-planes in $\R^d$ by $\gr(n,d)$.
        \item We define $F: \R^n \to \R^d$ to be the function $F(\xi) = (\xi, F_1(\xi), \dots, F_{d-n}(\xi))$.
        \item For an open rectangle or ball $\square \subset \R^n$, we use $\chi_{\square}$ to denote a smooth bump function adapted to $\square$, $\mathbf{1}_{\square}$ to denote the indicator function for $\square$, and $\Gamma_{\de}(\square)$ to denote a $\de$-net of $\square$, that is, a maximal $\de$-seperated set in $\square$.
        \item We denote the ball centered at $x$ with radius $r$ by $B(x,r)$, the $d$-dimensional unit sphere $\{x \in \R^{d+1} : |x| = 1\}$ by $S^d$, and the standard basis for $\R^n$ by $e_1, \dots, e_n$.
        \item We denote by $d_H$ the Hausdorff metric on compact subsets of $\R^d$ and by $d_{\Gr}$ the metric on the Grasmannian given by $d_{\Gr}(V,W) = d_H(V \cap S^{d-1}, W \cap S^{d-1})$. 
    \end{itemize}

    \section{Necessary conditions for bounds on the Kakeya maximal function}\label{sec:kmf}

    We begin our proof of Theorem ~\ref{thm:kmf} with the construction of a set $K$ which can be covered by few $\de$-balls while still containing a plane segment in every direction of $\mathcal{N}$. To do so, we consider $\mathcal{N}$-Kakeya sets which are a finite union of right rectangular prisms with $n$ short sides of length $\de$ and $d-n$ long sides of length $1$. Going forward, we call such prisms $\de$-prisms and call the $\mathcal{N}$-Kakeya set which comprise the prisms a \emph{scale $\de$ $\mathcal{N}$-Kakeya set}. We denote the number of $\rho$-balls necessary to cover a set $K$ by $|K|_{\rho}$. 

    \begin{proposition}\label{prop:propKak}
        For any $\de > 0$ and any $\mathcal{N} \subset \Gr(d-n,d)$ satisfying $|\mathcal{N}|_{\rho} \lesssim \rho^{-n}$ for all $\rho \in [\de,1)$, there exists a scale $\de$ $\mathcal{N}$-Kakeya set $K$ satisfying \[|K|_{\de} \lesssim \de^{-d}\left(\frac{\log \log 1/\de}{\log 1/\de}\right)^{n/(d-n)}.\]
    \end{proposition}

    We may restrict the choices of $\mathcal{N}$ to those where $d_{\Gr}(\mathcal{N}, \text{span}\{e_1, \dots, e_{d-n}\}) \le c_{d,n}$ for a dimensional constant $c_{d,n}$ in our proof of Proposition \ref{prop:propKak}, since we can then acheive the general case by rotating and taking finite unions. We begin with a simple geometric lemma. Denote by $d'_H(A,B) = \sup_{x \in B} \inf_{y \in A} |x-y|$ the asymmetric distance between $A$ and $B$.
    \begin{lemma}\label{lem:balls}
        If $d'_H(A,B) < \rho$, then $|B|_{\de + \rho} \le |A|_{\de}$. 
    \end{lemma}
    \begin{proof}
        This is immediate if $|A|_{\de}$ is infinite. Otherwise, if $\{B(x_i, \de)\}$ is a collection of $|A|_{\de}$ many balls covering $A$, then $\{B(x_i, \de + \rho)\}$ is the same number of $\de + \rho$ balls covering $B$, since each point in $B$ is within $\rho$ of a point in $A$. 
    \end{proof}
    We now prove that for any scale $\de$ $\mathcal{N}$-Kakeya set $K$ and any $\e > 0$, for any "horizontal strip" $A_{h,\e} = \left(\prod_{i=1}^{d-n} [h_i-\e, h_i+\e]\right) \times [-2,2]^n$ we can find an $\mathcal{N}$-Kakeya set $K'$ at scale $\de \e$ which is small in $A_{h,\e}$ and is close with respect to $d'_H$ to $K$. 
    \begin{lemma}\label{lem:induct}
        For any $h \in [-2,2]^{d-n}$ and $\de, \e > 0$, if $K$ is a scale $\de$ $\mathcal{N}$-Kakeya set and $\rho = \de \e$, then there exists a scale $\rho$ $\mathcal{N}$-Kakeya $K'$ such that $|K' \cap A_{h,\e}|_{\rho} \le \de^{n-d}|\mathcal{N}|_{\de}$ and $d'_H(K', K) < \de$. 
    \end{lemma}

    \begin{proof}
        Take a collection of $m \approx |\mathcal{N}|_{\de}$ $\de$-balls $B(P_1, \de), \dots, B(P_m, \de)$ covering $\mathcal{N}$. We know $K$ contains a plane segment in the direction of $P_i$ translated by some $\tau_i \in A_{h, \e}$ for each $i$. For each $i$, the cone consisting of $\rho$ prisms of the directions in $B(P_i, \de)$, centered at $\tau_i$ has measure $\e^{d-n} \times (\de \e)^n$ in $A_{h, \e}$. The union $K'$ of these prisms is a scale $\rho$ $\mathcal{N}$-Kakeya set with $d'_H(K',K) < \de$ and total measure in $A_{h,\e}$ of $\approx \e^d \de^{d-n} |\mathcal{N}|_{\de}$ and hence $|K' \cap A_{h,\e}|_{\rho} \lesssim \de^{n-d}|\mathcal{N}|_{\de}$
    \end{proof}

    Finally, we prove the proposition. We first outline the proof. Fix a parameter $\e > 0$ and take $k = \e^{n-d}$ the $\e$-covering number of $[-2,2]^{d-n}$. Eventually, we take $\de = \e^{k+1}$. Eumerate an $\e$-covering set of $[-2, 2]^{d-n}$ as $h_0, h_1, \dots, h_k$ and let $A_i = A_{h_i, \e}$. We start with a scale $1$ cone $K_0$ centered at the first horizontal stripe $A_{h_0, \e}$, which we denote $A_0$. This has small measure in $A_0$ and is an $\mathcal{N}$-Kakeya set. We can use Lemma \ref{lem:induct} to find an $\mathcal{N}$-Kakeya set $K_1$ which is small in another horizontal strip $A_1$ and is close with respect to $d'_H$ to $K_0$. We repeat this process until we have used up all of the strips and arrive a scale $\e^{k+1}$ $\mathcal{N}$-Kakeya set $K_k$. At this point, we use Lemma \ref{lem:balls} to see that $K_k$ is approximately the same size as $K_j$ in $A_j$. Adding up the contribution from each strip, we arrive at the desired size for $|K_k|_{\de}$. 
    \begin{proof}[Proof of Proposition \ref{prop:propKak}]
        Fix $\de > 0$ and choose $\e$ to satisfy $\e^{\e^{-(d-n)} + 1} = \de$. Since the mapping $\e \mapsto \e^{\e^{-(d-n)} + 1}$ is a decreasing continuous map $(0,1] \to (0,1]$, so we can find such an $\e \in (0, 1]$ for any $\de \in (0, 1]$ . Take $k = \e^{-(d-n)}$ and let $\rho_j = \e^j$ for $j = 0, 1, \dots, k$. Note that $\rho_j > \de$ for all $j$. We can find a scale $\rho_0$ $\mathcal{N}$-Kakeya set $K_0$ with $|K_0 \cap A_0|_{\rho_1} = 1 \approx \rho_0^{n-d} |\mathcal{N}|_{\rho_0}$ in $A_0$ by taking the cones in the previous lemma all centered at a common point in $A_0$. Since we've assumed that all the planes in $\mathcal{N}$ are quantitatively transverse to the long directions of $A_0$, these cones intersect $A_0$ in an $\e$ ball. Obtain $K_{i+1}$ from $K_i$ by applying Lemma \ref{lem:induct} with $h:=h_{i+1}$. From the conclusion of the lemma, we know that $|K_{i+1} \cap A_{h_{i+1}}|_{\rho_{i+1}} \le \rho_i^{n-d}|\mathcal{N}|_{\rho_i}$. Applying Lemma \ref{lem:balls} for each $j \le i$, we see that $K_{i+1} \cap A_{h_j}$ can be covered by $\rho_j^{n-d}|\mathcal{N}|_{\rho_j}$ many balls of radius less than $\rho'_{j+1} = \sum_{k = j+1}^{\infty} \rho_k \approx \rho_{j+1}$. Hence, $|K_{i+1} \cap A_{h_j}|_{\rho'_{j+1}} \lesssim {\rho'}_j^{n-d}|\mathcal{N}|_{\rho'_j}$ for each $j \le i+1$. 

        After $k$ steps, we have gone over each horizontal strip and conclude with a set $K := K_{k}$. Choose $\e$ so that $\de = \rho'_{k+1} \approx \e^{k+1} = \e^{1/\e^{d-n} + 1}$. We see that \[|K \cap A_{h_j}|_{\de} \lesssim \left(\frac{\e\rho'_j}{\de}\right)^d {\rho'}_j^{n-d}|\mathcal{N}|_{\rho'_j} = \e^d \de^{-d} {\rho'}_j^{n-d}|\mathcal{N}|_{\rho'_j} \lesssim \e^d \de^{-d}.\]Since $j$ takes $\approx \e^{n-d}$ many values, we conclude that we can cover $K$ with $\approx \e^{n}\de^{-d}$ many $\de$ balls. This is $\approx \de^{-d}\left(\frac{\log \log 1/\de}{\log 1/\de}\right)^{n/(d-n)}$ so long as $\left(\frac{\log 1/\de}{\log\log 1/\de}\right)^{{n/(d-n)}} \lesssim \e^{n}$. Since we have taken $\de = \e^{1/\e^{d-n} + 1}$, we see that $\log(1/\de) \approx \e^{n-d}\log(1/\e)$ and $\log \log (1/\de) \approx \log(1/\e)$, so $\left(\frac{\log 1/\de}{\log\log 1/\de}\right)^{{n/(d-n)}} \approx \e^{n}$, as desired.
    \end{proof}
    \begin{proof}[Proof of Theorem ~\ref{thm:kmf}]
        Let $\mathcal{N}$ be the normal space to $\mathcal{M}$. As $\mathcal{M}$ is a compact $n$-dimensional manifold, it satisfies $|\mathcal{M}|_{\de} \lesssim \de^{-n}$. The map sending a point in $\mathcal{M}$ to the normal vector to $\mathcal{M}$ at that point is a Lipschitz map sending $\mathcal{M}$ to $\mathcal{N}$, so we have that $|\mathcal{N}|_{\de} \lesssim \de^{-n}$ as well. Now that we have checked the hypothesis for $\mathcal{N}$, we can apply Proposition \ref{prop:propKak} for any $\de > 0$. Let $K$ be the resulting scale $\de$ $\mathcal{N}$-Kakeya set. Note that $K$ can be covered by $\de^{-d}\left(\frac{\log \log 1/\de}{\log 1/\de}\right)^{n/(d-n)}$ many $\de$ balls and hence has measure $\lesssim \left(\frac{\log \log 1/\de}{\log 1/\de}\right)^{n/(d-n)}$. Furthermore, $\mathcal{K}_{\de}(\mathbf{1}_K)(x) \ge 1$ for all $x \in \mathcal{N}$. It follows that $||\mathcal{K}_{\de}(\mathbf{1}_K)||_{L^{q, \infty}} \approx 1$. On the other hand, \[||\mathbf{1}_K||_{L^{p,1}} = |K|^{1/p} \lesssim \left(\frac{\log \log 1/\de}{\log 1/\de}\right)^{n/(p(d-n))}.\]Sending $\de \to 0$, we see that any bound \[||\mathcal{K}_{\de}(\mathbf{1}_K)||_{L^{q,\infty}} \ll \left(\frac{\log 1/\de}{\log \log  1/\de}\right)^{n/(p(d-n))}||\mathbf{1}_K||_{L^{p,1}}\]is impossible.
    \end{proof}

    \begin{remark}
        The modified Perron tree construction of Schoenberg yields standard Kakeya sets in $\R^d$ which can be covered by $\approx \frac{\de^{-d}}{\log (1/\de)}$ many $\de$-balls. We elaborate on the differences between the construction of Proposition \ref{prop:propKak} and that of Schoenberg to explain the slightly worse bound in Proposition \ref{prop:propKak}. 
        
        We first discuss both constructions in the case of Kakeya sets in $\R^2$. Both constructions follow from an iterative process, beginning with an equilateral triangle with altitude of length $1$ on the $y$-axis and repeatedly subdividing into smaller triangles and either rotating (in Proposition \ref{prop:propKak}) or translating (in Schoenberg's construction) the smaller triangles to make the union smaller in a specified horizontal strip. By iterating over enough horizontal strips to cover the entire triangle, we arrive at a Kakeya set with small measure in each horizontal strip and hence small measure in its totality. In Proposition \ref{prop:propKak}, we see that the rotations can be chosen to not significantly increase the measure of the union in the horizontal strips from previous iterations, while in Schoenberg's construction, the translations can be chosen to in fact make the union smaller in both the current horizontal strip and horizontal strips from all previous iterations. This gives a Kakeya set that can be efficiently covered by larger balls.

        In the case of general choices of $\mathcal{N}$, it is not clear how to carry out an analog of Schoenberg's construction. At each step of the iteration process, we would need to translate the "small triangles" (which are now actually cones) in the same direction to make the set smaller in the "horizontal strips" used in the previous iterations. On the other hand, those translations might not actually decrease the measure of the small triangles in the horizontal strip for the current step. Without further assumptions on $\mathcal{N}$, it seems difficult to imitate the approach of Schoenberg.

        We also have assumed that $|\mathcal{N}|_{\de} \approx \de^{-n}$. It is easy to see that this can be weakened to $|\mathcal{N}|_{\de} \lesssim \de^{-n}$, that is, the upper Minkowski dimension of $\mathcal{N}$ is at most $n$. This is true for the normal space to any compact $n$-dimensional smooth manifold and many other choices of $\mathcal{N}$, but does not hold for a general set of Hausdorff dimension $n$. Theorem \ref{thm:hkak} applies for such sets, but is less quantitative, with no explicit upper bound for the $\de$ covering number of it's $\mathcal{N}$-Kakeya sets. 
    \end{remark}
    
    \section{Failure of restricted weak-type bounds for $\mathcal{E}$}\label{sec:main}

        In this section, we reduce Theorem \ref{thm:main} to Theorem \ref{thm:kmf}. This reduction is substantially the same as the well-known proof that bounds on the Fourier extension operator for the paraboloid imply bounds on the standard Kakeya maximal function \cite[Theorem 22.12]{M15}. 
        
        Recall we have defined $F(\xi) = (\xi, F_1(\xi), \dots, F_{d-n}(\xi))$ and $\mathcal{M} = F(\Omega)$ for some open ball $\Omega$. The Fourier extension operator for $\mathcal{M}$ is \[\mathcal{E}f(x) = \int_{\Omega} e^{2 \pi i x\cdot F(\xi)} f(\xi)~d\xi.\]We begin by proving that we can approximate the extension operator applied to a bump function $\chi_{\de, \eta}$ on a $\de$-ball centered at $\eta \in \R^n$ from below with the indicator function of a rectangular prism $T_{\eta, \de}$ centered at $0$ of length $\approx \de^{-1}$ in the $n$ tangent directions to $\mathcal{M}$ at $F(\eta)$ and length $\approx \de^{-2}$ in the $d-n$ normal directions, and that we can approximate the modulated bump function $\chi_{\de, \eta, v}(\xi) := e^{-2 \pi i v \cdot F(\xi)} \chi_{\de, \eta}(\xi)$ from below by the translated prism $T_{\eta, \de, v} := T_{\eta, \de} + v$. 

        \begin{lemma}\label{lem:lem3}
            For $\de \in (0, 1]$, for any $\eta \in \square, v \in \R^{d}$, there exists a large plate $T_{\eta, \de, v}$ of length $\approx \de^{-2}$ in the $d-n$ direction normal to $\mathcal{M}$ at $\eta$ and $\approx \de^{-1}$ in the $n$ directions tangent to $\mathcal{M}$ at $F(\eta)$, with $|\mathcal{E}(\chi_{\eta, \de, v})|(x) \gtrsim \de^n$ for $x \in T_{\eta, \de, v}$.
        \end{lemma}

        First, we define the plates using symmetries of the extension operator. 

        \begin{proposition}\label{prop:sym}
            Let $f:\R^m \to \R$ be an arbitrary function. For an $m\times m$ matrix $M$, denote $x \mapsto f(Mx)$ by $f_M$. If $Mx = \de x$ for some scalar $\de$, we write $f_{\de}$ instead of $f_M$. For $\eta \in \R^m$, denote $x \mapsto f(x - \eta)$ by $f^{\eta}$. has the following symmetries:
            \begin{enumerate}[label=(\alph*)]
                \item $\mathcal{E}(f_{1/\de})(x) = \de^{n}\mathcal{E}(f)(D_{\de}x)$, where $D_{\alpha}$ is the diagonal matrix with $\alpha$ in the first $n$ diagonal entries and $\alpha^2$ on final $d-n$ entries.
                \item $\mathcal{E}(f^{\eta}) = e^{2\pi i x \cdot F(\eta)}\mathcal{E}(f)(A^*(\eta)x)$, where $A(\eta)$ is the lower triangular matrix with ones on the diagonal and entries polynomial in $\eta$ satisfying $F(\xi + \eta) = F(\eta) + A(\eta)F(\xi)$.
                \item $\mathcal{E}(e^{-2 \pi i v \cdot F(\xi)}f)(x) = \mathcal{E}(f)(x-v)$.
            \end{enumerate}
        \end{proposition}

        These symmetries are well-known and follow easily from changes of variables, so we do not prove them here. 

        \begin{proof}[Proof of Lemma~\ref{lem:lem3}]
            We may assume, without loss of generality, that $\Omega = B(0,1)$. For $x$ in a small neighborhood $[-\rho, \rho]^n$ of $0$, $x \cdot F(\xi) \le 1/8$ for all $\xi \in B(0,1)$ and hence if we set $\chi_{0,1} = \chi_{B(0,1)}$, then for $x \in T_{0,1} := [-\rho, \rho]^n$, $\mathcal{E}(\chi_{0,1})(x) \gtrsim 1$. Now set $T_{0,\de} = D_{1/\de}T_{0,1}$ and $\chi_{0,R} = (\chi_{0,1})_R$. Because of the form of the dilation, this plate has the right dimensions. By the first symmetry in Proposition~\ref{prop:sym}, we have that for $x \in T_{0,R}$, $\mathcal{E}(\chi_{0,R})(x) = R^{-n}\mathcal{E}(\chi_{0,1})(D_{1/R}x) \gtrsim R^{-n}$. 

            Next, we define the frequency translated plates. For $\eta \in \square$, set $\tilde{T}_{\eta,R} = (A^*(\eta))^{-1}T_{0,R}$. The transformation $A^*(\eta)^{-1}$ sends the long directions of $T_{0,R}$ to span $\mathcal{N}_{\eta}$, as desired. However, since $(A^*(\eta))^{-1}$ is not an orthogonal matrix, $\tilde{T}_{\eta,R}$ is an oblique rectangular prism, while by our definition, a plate must be a right rectangular prism. Let $V$ denote the plane segment formed by the long directions of $\mathcal{N}_{\eta}$ and let $T_{\eta, R}$ be the $R$ neighborhood of $V$. By rescaling $T_{\eta,R}$ by the $\approx 1$ factor, we have $T_{\eta,R} \subset \tilde{T}_{\eta,R}$. Set $\chi_{\eta,R} = \chi_{0,R}^{\eta}$. Then by the translational symmetry of the extension operator, for $x \in T_{\eta,R}$, $|\mathcal{E}(\chi_{\eta,R})(x)| = |e^{2\pi i x \cdot F(\eta)}\mathcal{E}(f)(A^*(\eta)x)| \gtrsim R^{-n}.$

            Finally, we extend this to spatially translated plates. For $v \in \R^{d}$, set $T_{\eta, R, v} = T_{\eta,R}+ v$. These are clearly plates of the right dimensions and if we set $\chi_{\eta, R, v} = e^{-2 \pi i v \cdot F(\xi)}\chi_{\eta, R}$, then $\mathcal{E}(\chi_{\eta,R,v})(x) = \mathcal{E}(\chi_{\eta,R})(x-v)$. Both desired conditions follow immediately.
        \end{proof}

        Using this approximation, we prove that a restricted weak-type $(p, 2d/n)$ bound for the Fourier extension operator for $\mathcal{M}$ implies a restricted weak-type $(d/(d-n), p/(p-2))$ bound independent of $\de$ for the Kakeya maximal function associated with the normal set to $\mathcal{M}$. We in fact prove a general deduction from any endpoint Lorentz space bounds for the restriction operator.

        \begin{proposition}\label{prop:prop3}
            If $\mathcal{E}$ is bounded $L^{p,s} \to L^{2d/n, r}$ with constant $C$, then for all $\de > 0$, the associated Kakeya maximal function $\mathcal{K}_{\de}$ is bounded from $L^{d/(d-n),r'}$ to $L^{q, s'}$ with constant $\approx C^2$, where $q = \frac{p}{p-2}$ is the H\"older conjugate to $p/2$. 
        \end{proposition}

        \begin{proof}
            Take $f \in L^q(\R^d)$ and $P_1, \dots, P_m$ a maximal $\de$-separated subset of $\mathcal{N}$. Since $\mathcal{N} \subset \bigcup_{i=1}^m B(P_i, \de)$, by a covering argument we have that $||K_{\de}(f)||_{L^{q,s'}} \le ||\de^{n/q}K_{\de}f(P_i)||_{\ell^{q, s'}}$, where $||\cdot||_{\ell^{p,s}}$ denotes the Lorentz quasinorm on $\R^m$ with respect to the counting measure.It then suffices to prove that $||\de^{n/q}K_{\de}f(P_i)||_{\ell^{q, s'}} \le C||f||_{L^{d/(d-n), r'}}$, which by duality would follow from proving $\sum_{i=1}^m \de^{n/q} c_i K_{\de}(f)(P_i) \le C||f||_{L^{d/(d-n), r'}}$ for any sequence $c = (c_1, \dots, c_m)$ with $||c||_{\ell^{p/2, s'}(\R^m)} = 1$. For some choice of translations $v_i \in \R^d$, \[\de^{n/q}K_{\de}(f)(P_i) \approx \de^{-n/q'}\int_{\R^d} \mathbf{1}_{P_{i, \de}(v_i)} |f|(x)~dx,\]so $\sum_{i=1}^m \de^{n/q} c_i K_{\de}(f)(P_i) \le \int_{\R^d}\sum_{i=1}^m\de^{-n/q'}c_i\mathbf{1}_{P_{i, \de}(v_i)} |f|(x)~dx$. By H\"older's inequality, \[\int_{\R^d}\sum_{i=1}^m\de^{-n/q'}c_i\mathbf{1}_{P_{i, \de}(v_i)} |f|(x)~dx \le \left|\left| \sum_{i=1}^m \de^{-n/q'} c_i \mathbf{1}_{P_{i, \de}(v_i)}\right|\right|_{L^{d/n, r}} ||f||_{L^{d/(d-n), r'}}.\] Therefore, it suffices to prove that $\left|\left| \sum_{i=1}^m \de^{-n/q'} c_i \mathbf{1}_{P_{i, \de}(v_i)}\right|\right|_{L^{d/n, r}} \lesssim C^2$, which after rescaling follows from proving that $\left|\left| \sum_{i=1}^m \de^{-n/q'} c_i \mathbf{1}_{T_{i}(v_i)}\right|\right|_{L^{d/n, r}} \le C^2 \de^{-n}$, where $T_i = \de^{-2}P_{i, \de}$.

            Note that there exist $\approx \de$ separated points $\xi_1, \dots, \xi_n$ such that the normal to $\mathcal{M}$ at $F(\xi_i)$ is $P_i$. By Lemma \ref{lem:lem3}, we know $|\mathcal{E}(\de^{-n/(2q')} \sqrt{c_i} \chi_{\xi_i, \de, v_i})|(x) \gtrsim \de^{n-n/(2q')}\sqrt{c_i}\mathbf{1}_{T_i(v_i)}$ for $x \in T_i(v_i)$. It follows that \[\sum_{i=1}^m \de^{-n/q'}c_i\mathbf{1}_{T_i(v_i)}(x) \lesssim \de^{-n}\sum_{i=1}^m |\mathcal{E}(\de^{-n/(2q')} \sqrt{c_i} \chi_{\xi_i, \de, v_i})|^2(x).\]Since $\xi_i$ is an $\approx \de$-separated family and $\chi_{\xi_i, \de, v_i}$ is supported on $B(\xi_i, \de)$, the supports of $\xi_i$ are boundedly overlapping. Then by Khinchine's inequality and our assumed bound for the restriction operator \begin{align*}
                \left|\left| \sum_{i=1}^m \de^{-n/q'} c_i \mathbf{1}_{T_i(v_i)}\right|\right|_{L^{d/n, r}} &\lesssim  \de^{-n}\left|\left|\left(\sum_{i=1}^m |\mathcal{E}(\de^{-n/(2q')} \sqrt{c_i}\chi_{\xi_i, \de, v_i})|^2\right)^{1/2}\right|\right|_{L^{2d/n, r}}^2\\ &\lesssim \de^{-n}C^2||\sum_{i=1}^m \de^{-n/(2q')} \sqrt{c_i}\chi_{\xi_i, \de, v_i}||^2_{L^{p,s}}\\ &\approx \de^{-n}C^2\de^{-n/q'} \left|\left|\sum_{i=1}^m c_i\right|\right|_{\ell^{p/2, s'}}\de^{2n/p} = \de^{-n}C^2
            \end{align*}
        \end{proof}
        
    Theorem \ref{thm:main} follows easily from here, since by Proposition \ref{prop:prop3} any bound $\mathcal{E}:L^{p, 1} \to L^{2d/n, \infty}$ implies a bound $\mathcal{K}_{\de}:L^{d/(d-n),1} \to L^{q, \infty}$ with constant independent of $\de$, contradicting Theorem \ref{thm:kmf}. 

    \section{Constructing measure zero $\mathcal{N}$-Kakeya sets}\label{sec:hkak}
    We now prove Theorem~\ref{thm:hkak}. This proof generalizes that of K\"orner, which covers the existence of standard measure zero Kakeya sets in two dimensions. The two-dimensional case is most easily visualized and is worth keeping in mind while reading this section. 

    Let $\Hd_{\gr(d-n,d)}^s$ denote the $s$-dimensional Hausdorff measure on $\gr(d-n, d)$ induced by $d_{\Gr(d-n, d)}$: \[\mathcal{H}_{\gr(d-n,d)}^s(G) = \liminf_{\de \searrow 0}\left\{\sum_{i=1}^{\infty} r_i^s \Bigm\vert \exists \{E_i\}_{i \in \N} \subset \text{Gr}(d-n,d) \text{ s.t. } G \subset \bigcup_{i=1}^\infty E_i, \diam(E_i) = r_i < \de \right\}.\]Let $\mathcal{N} \subset \gr(d-n,d)$ be a fixed closed set. By a covering argument, we may assume for the proof of Theorem~\ref{thm:hkak} that for every $V \in \mathcal{N}$, $d_{\gr(d-n,d)}(V, \text{span}(e_1, \dots, e_{d-n})) \le c_{d,n}$ for a dimensional constant $c_{d,n}$.  

    \begin{definition}
        \begin{itemize}
            \item Define $\mathcal{K}$ to be the set of compact subsets of $[-2,2]^d$, equipped with the Hausdorff metric. 
            \item Define $\mathcal{P}$ to be the set of $E \in \mathcal{K}$ such that for any $V \in \mathcal{N}$, there exists $x \in [-2,2]^d$ such that $x+V \cap [-2,2]^d \subset E$. Note that every element of $\mathcal{P}$ is an $\mathcal{N}$-Kakeya set.
            \item We say sets in $A_{h,\e}$ which can be covered by balls $B_1, \dots, B_k$ of radius $\le \e$ and such that $\sum_{i=1}^k \text{radius}(B_i)^{s + d - n} < C\e^{s + d-n}$ satisfy the \emph{$h, \e$ area condition}. For $h = (h_1, \dots, h_{d-n}) \in [-2,2]^{d-n}$ and $\e > 0$, we define $\mathcal{P}(h, \e)$ to be the set of all $E \in \mathcal{P}$ such that $E \cap A_{h, \e}$ satisfy the $h, \e$ area condition. 
        \end{itemize}
        
    \end{definition}

    The proof of Theorem~\ref{thm:hkak} follows from proving that $\mathcal{P}(h,\e)$ is open and dense. To see this, we apply the Baire category theorem to prove that $\mathcal{P}_0 = \bigcap_{m \in \N}\bigcap_{h \in \Gamma_{1/m}([-2,2]^{d-n})} \mathcal{P}(h, 1/m)$ is nonempty. Take $E \in \mathcal{P}_0$. For any $m \in \N$, $E \in \bigcap_{h \in \Gamma_{1/m}([-2,2]^{d-n})} \mathcal{P}(h, 1/m)$, so for each $h \in \Gamma_{1/m}([-2,2]^{d-n})$, we can cover $E \cap A_{h,\e}$ with balls $B_1^h, \dots, B_k^h$ such that $\text{radius}(B_i^h) < 1/m$ and $\sum_{i=1}^k \text{radius}(B^h_i)^{s+d-n} < m^{-(s+d-n)}$. Then we can cover all of $E$ with balls $\{B_i^h: h \in \Gamma_{1/m}, 1 \le i \le k\}$, all of which have $\text{radius}(B_i^h) < 1/m$ and satisfy \[\sum_{h \in \Gamma_{1/m}}\sum_{i=1}^k \text{radius}(B^h_i)^{s+d-n} < m^{-(s + d-n)}m^{d-n} = m^{-s}.\]Since this holds for any $m \in \N$, we see that $\mathcal{H}^{s+d-n}(E) = 0$.

    \begin{proposition}\label{prop:propk}
        If $d_{\gr(d-n,d)}(\mathcal{N}, \text{span}\{e_1, \dots, e_{d-n}\}) \lesssim 1$ and $\Hd_{\gr(d-n,d)}^{n}(\mathcal{N}) < \infty$, then for any $h \in [-2,2]^{d-n}$ and $\e > 0$, $\mathcal{P}(h,\e)$ is open and dense in $\mathcal{P}$.
    \end{proposition}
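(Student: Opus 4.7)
The plan is to handle openness and density separately.

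Openness of $\mathcal{P}(h, \varepsilon)$ in $\mathcal{P}$ follows from upper semicontinuity of $E \mapsto |E \cap A_h|$ under Hausdorff convergence on $\mathcal{K}$: if $E_n \to E$ in Hausdorff metric then for every $\delta > 0$ eventually $E_n \subset E_\delta$, so $|E_n \cap A_h| \leq |E_\delta \cap A_h|$, and the latter tends to $|E \cap A_h|$ as $\delta \searrow 0$ by dominated convergence since $E$ and $A_h$ are closed. Hence the strict inequality defining $\mathcal{P}(h, \varepsilon)$ persists under small Hausdorff perturbations inside $\mathcal{P}$.

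For density, fix $E \in \mathcal{P}$ and $\eta > 0$; the goal is to produce $E' \in \mathcal{P}$ with $d_H(E, E') < \eta$ and $|E' \cap A_h| < C\varepsilon^d$. Using $\Hd^n_{\gr(d-n,d)}(\mathcal{N}) < \infty$, cover $\mathcal{N}$ by finitely many Grassmannian balls $\{B(V_i, r_i)\}$ with $r_i < \min(\eta, \varepsilon)$ and $\sum_i r_i^n$ bounded in terms of $\mathcal{N}$ alone. For each $V_i$, the $\mathcal{N}$-Kakeya property of $E$ provides an $x_i$ with $(x_i + V_i) \cap [-2,2]^d \subset E$, and I form the plate $T_i$ by $r_i$-thickening this plane segment transversely to $V_i$. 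A standard comparison of Grassmannian distance to pointwise displacement on bounded sets ensures every $V \in B(V_i, r_i) \cap \mathcal{N}$ still admits a translate contained in $T_i$, so any union of plates covering the net of $\mathcal{N}$ automatically carries a translate of every direction in $\mathcal{N}$.

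The measure estimate is the crux. A single plate has $|T_i \cap A_h| \approx \varepsilon^{d-n} r_i^n$: the near-horizontal plane $x_i + V_i$ meets the slab $A_h$ in a $(d-n)$-cube of side $\approx \varepsilon$, and thickening by $r_i$ in the $n$ transverse directions gives this $d$-volume. The naive sum $\varepsilon^{d-n} \sum_i r_i^n \approx \varepsilon^{d-n}$ exceeds the target $\varepsilon^d$ by a factor $\varepsilon^{-n}$, so the construction must arrange substantial overlap. Paralleling K\"orner's focal construction for the case $(n=1, d=2)$, I position the $x_i$ so that every plate passes through a common ball of radius $\varepsilon$ around a focal point $y^* \in A_h$. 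The intersections $T_i \cap A_h$ then lie in a common region of $d$-volume $\approx \varepsilon^{d-n}\cdot\varepsilon^n = \varepsilon^d$, yielding the required bound.

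The main obstacle is reconciling focal concentration with Hausdorff closeness to an arbitrary $E$: pinning the plates through $y^*$ inside $A_h$ leaves only an $\varepsilon$-neighborhood of $y^*$ to work with, while Hausdorff closeness demands the union of plates $\eta$-cover $E$ throughout the whole cube. The resolution presumably uses that each plate is $(d-n)$-dimensional and extends across the cube outside $A_h$, so it can be deflected within its translation freedom in $V_i^\perp$ to meet a chosen point of $E$ far from the slab without losing the focal constraint inside $A_h$. The hypothesis $d_{\gr(d-n,d)}(\mathcal{N}, \text{span}\{e_1, \ldots, e_{d-n}\}) \lesssim 1$ ensures every $V_i$ is sufficiently transverse to $A_h$'s long axis for such a deflection to exist. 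Verifying that this simultaneous construction can be carried out for every $E \in \mathcal{P}$, and that the resulting plate family is genuinely $\eta$-close to $E$ in Hausdorff metric while preserving the focal estimate inside $A_h$, is where I expect the detailed technical work to concentrate.
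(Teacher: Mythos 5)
Your openness argument is fine and is essentially the paper's. The density argument, however, has a genuine gap, and you have correctly located where it is: the focal constraint and Hausdorff closeness to an \emph{arbitrary} $E \in \mathcal{P}$ cannot both be imposed on the same family of plates. A plate in direction $V_i$ is rigid: once it is required to pass through $B(y^*,\e)\subset A_h$, its transverse position is pinned to within $O(\e)$ of the plane $y^*+V_i$ \emph{everywhere} in $[-2,2]^d$ (translating along $V_i$ does nothing, and the uniform transversality hypothesis bounds the drift as you move horizontally away from the slab). Hence the union of all admissible plates lies in an $O(\e)$-neighborhood of the cone $\bigcup_{V\in\mathcal{N}}(y^*+V)$, which need not come anywhere near a general $E$ (take $E$ to be a union of plane segments scattered across the cube). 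The ``deflection within its translation freedom in $V_i^\perp$'' that you invoke is precisely the degree of freedom the focal constraint removes, so the construction cannot simultaneously be $\eta$-close to $E$ and satisfy the measure bound.

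The paper escapes this by giving the approximant two components with different jobs: $F=E'\cup E_{\mathcal{U}}$. Here $E'\subset E$ is a finite union of plane segments already contained in $E$ whose $\de$-neighborhoods cover $E$; it has measure zero, contributes nothing to $|F\cap A_h|$, and by itself handles $\rho(E,F)<\de$. The Kakeya-witnessing piece $E_{\mathcal{U}}$ therefore need not cover $E$ at all; it only has to lie close to $E$ and meet $A_h$ in small measure. It is built not from thickened plates but from cones: for each ball $B(V_i,\rho_i)$ of the Grassmannian cover, one takes the union of the exact planes $V'+(h,v_i)$ over $V'\in B(V_i,\rho_i)$, all through the single point $(h,v_i)$ at which $E$'s own plane segment in direction $V_i$ meets $\{h\}\times\R^n$. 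This pointwise focusing is also needed for the measure count: a plate of uniform thickness $r_i$ gives $|T_i\cap A_h|\approx\e^{d-n}r_i^n$ and summing yields only $\e^{d-n}$, whereas the cone has width $0$ at the focal point growing linearly to $\lesssim\e\rho_i$ at the edge of the slab, so $|E_i\cap A_h|\lesssim\e^d\rho_i^n$ and the sum is $\lesssim\e^d$. With these per-direction focal points chosen on $E$ itself, closeness of $E_{\mathcal{U}}$ to $E$ is automatic since $\rho_i<\de$, and no global focal point is required. To repair your proof you need both changes: the $E'\cup E_{\mathcal{U}}$ split and the replacement of uniformly thickened plates by focused cones.
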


    \begin{proof}[Proof of Proposition~\ref{prop:propk}]
        First, we will prove that $\mathcal{P}(h,\e)$ is open. If $E \in \mathcal{P}(h, \e)$ then by the compactness of $E$, there exists a collection of balls $B_1, \dots, B_k$ covering $E \cap A_{h,e}$ such that $\sum_{i=1}^k \text{radius}(B_i)^{s + d - n} < C\e^{s + d-n}$. For some small $\de > 0$, we can increase the radius of each $B_i$ by some $\de$ to $\tilde{B}_i$, while ensuring that $\sum_{i=1}^k \text{radius}(\tilde{B_i})^{s + d - n} < C\e^{s + d-n}$ as well. If $E' \in \mathcal{P}$ and $d_{H}(E, E') < \de$, then $E' \cap A_{h, \e} \subset \bigcup_{i=1}^k \tilde{B}_i$ and hence $E' \cap A_{h, \e}$ satisfies the $h, \e$ area condition. Therefore. $\mathcal{P}(h,\e)$ is open.
        
        Now, let's prove that $\mathcal{P}(h, \e)$ is dense in $\mathcal{P}$. Fix $E \in \mathcal{P}$ and $\de > 0$. We need to find $F \in \mathcal{P}(h,\e)$ such that $d_H(E,F) \lesssim \de$. Recalling the definition of the Hausdorff metric, we need to prove that $\rho(E,F) \lesssim \de$ and $\rho(F,E) \lesssim \de$. Suppose we can find sets $E' \subset E, E_{\mathcal{U}} \subset [-2,2]^d$ with the following properties:
        \begin{enumerate}[label = (\alph*), font = \bfseries]
            \item $\mathcal{H}^{s+d-n}(E') = 0$,
            \item $\rho(E,E') < \de$,
            \item $\rho(E_{\mathcal{U}}, E) \lesssim \de$, and
            \item $E_{\mathcal{U}} \in \mathcal{P}(h,\e)$.
        \end{enumerate}

        If we set $F = E' \cup E_{\mathcal{U}}$, then $F \in \mathcal{P}$ because $E_{\mathcal{U}} \in \mathcal{P}$ and $E'$ is compact. Also, $F \cap A_{h, \e}$ satisfies the $h, \e$ area condition, because $\mathcal{H}^{s+d-n}(E') = 0$ and $E_{\mathcal{U}}$ satisfies the $h, \e$ area condition, so $F \in \mathcal{P}(h,\e)$. Since $E' \subset E$, $d_H(F,E) \le \rho(E_{\mathcal{U}},E) + \rho(E, E') \lesssim \de$. Since $E$ and $\de$ are arbitrary, we can conclude that $\mathcal{P}(h,\e)$ is dense in $\mathcal{P}$.

        So it remains to find the sets $E'$ and $E_{\mathcal{U}}$ and verify the four properties we stated about them. We start with constructing $E'$ and proving it satisfies \textbf{(a)} and \textbf{(b)}. Since $E$ is compact, we can find a collection $x_1, \dots, x_k \in \R^d$, $V_1, \dots, V_k \in \mathcal{N}$ such that $V_i + x_i \in E$ for all $i$ and the $\de$-neighborhood of $V_i + x_i$ covers $E$. Set $E' = \bigcup_{i=1}^k (V_i + x_i)$. By construction, $E' \subset E$. Since $E'$ is a finite union of $\mathcal{H}^{s+d-n}$ null sets, $\mathcal{H}^{s+d-n}(E') = 0$, satisfying \textbf{(a)}. Since the $\de$-neighborhood of $E'$ covers $E$, $\rho(E, E') < \de$, satisfying \textbf{(b)}. 

        Now we define $E_{\mathcal{U}}$. Take $\mathcal{U}$ to be a finite collection of balls $B(V_i, \rho_i)$ covering $\mathcal{N}$ such that $\rho_i < \de$ for all $i$ and $\sum_i \rho_i^{s} \lesssim_{\mathcal{N}} 1$. Since $\mathcal{N}$ is compact and $\Hd_{\gr(d-n,d)}^{n}(\mathcal{N}) < \infty$, we know we can find such a set $\mathcal{U}$. For each plane $V_i$, we can find a point $v_i \in \R^n$ such that $V_i + (h,v_i) \in E$. Each ball $B(V_i, \rho_i)$ induces a set $E_{i}$ given by \[E_{i} = \bigcup \{V' + (h,v_i) : V' \in B_{\rho_i}(V_i)\}.\]Geometrically, we obtain $E_{B_i}$ from $V_i + (h,v_i)$ by rotating $V_i$ in any direction through an angle of at most $\rho_i$. Define $E_{\mathcal{U}} = \bigcup_{B(V_i, \rho_i) \in \mathcal{U}} E_i$.
        
        For any $V, V' \in \gr(d-n,d)$ and any point $x \in [-2,2^d]$, $\rho(V +x,V' + x) \lesssim d_{\gr(d-n,d)}(V,V')$, so since $\mathcal{U}$ consists of balls of radius $<\de$, $\rho(E_{\mathcal{U}}, E) \lesssim \de$, fulfilling \textbf{(c)}.

        To prove \textbf{(d)}, we need to prove that $E_{\mathcal{U}} \in \mathcal{P}$ and $E_{\mathcal{U}} \cap A_{h, \e}$ satisfies the $h, \e$ area condition. Since $\mathcal{N} \subset \bigcup_{B \in \mathcal{U}} B$, we see that $E_{\mathcal{U}} \in \mathcal{P}$. For the $h, \e$ area condition, note that since $E_{\mathcal{U}} \cap A_{h, \e} = \bigcup_{i=1}^k E_i \cap A_{h, \e}$, it suffices to prove that $E_i \cap A_{h,\e}$ can be covered by balls $B^i_1, \dots, B^i_{k_i}$ such that $\sum_{j=1}^{k_i} \text{radius}(B^i_j)^{s+d-n} \lesssim \e^d \rho_i^s$, since $\sum_{i} \rho_i^s \lesssim_{\mathcal{N}} 1$. But $E_i \cap A_{h, \e}$ is contained in a prism of length $\e\rho_i$ in $n$ directions and $\e$ in the remaining $d-n$ directions and hence can be covered by $\rho_i^{n-d}$ many $\e\rho_i$ balls. It follows that \[\sum_{j=1}^{k_i} \text{radius}(B_j^i)^{s+d-n} \le \rho_i^{n-d} (\e \rho_i)^{s + d-n} = \e^{s+d-n}\rho_i^s.\]Hence, \textbf{(d)} holds, completing the proof of the proposition.
    \end{proof}

    \section{Remarks}

    \begin{enumerate}
        \item Theorem \ref{thm:main} concerns the failure of $L^{p,s} \to L^{2d/n, r}$ restriction estimates for an $n$-dimensional quadratic manifold in $\R^d$. For many such manifolds, it already known that the extension operator is not bounded $L^p \to L^q$ for some $q > \frac{2d}{n}$ and any $p$, in which case the Lorentz-space endpoint bound clearly must fail as well. To clarify when our results may be relevant, we briefly discuss when we know or expect extension estimates to fail for some $q > \frac{2d}{n}$.
        
        It is a well-known conjecture that well-curved manifolds of codimension one or two should satisfy the restriction estimates up to $\frac{2d}{n}$. In the codimension one case, this dates back to the original formulation of the restriction problem. The codimension two conjecture was given by Christ in 1982~\cite{C82}. By taking Cartesian products of lower codimensional manifolds, we can find higher codimensional manifolds which should also satisfy restriction estimates up to $\frac{2d}{n}$, but this technique only works to give examples of codimension at most $\frac{d}{2}$. For all manifolds with codimension at most $\frac{d}{2}$, we know curvature is necessary for the restriction estimates up to $\frac{2d}{n}$ to be possible, although we cannot characterize well-curved manifolds except for in the codimension one and two case\footnote{A recent paper by Gressman constructed affine-invariant measures on submanifolds of arbitrary codimension~\cite{G19}. Better understanding these measures could give us more information about when we should expect manifolds to have better or worse restriction estimates.}. It was conjectured by Mockenhaupt in 1996 that restriction estimates up to $\frac{2d}{n}$ were only possible for codimension at most $\frac{d}{2}$ and he proved the conjecture when $d$ is odd~\cite{M96}. The case that $d$ is even is still generally open. 
    
        \item Theorem \ref{thm:hkak} is similar to the main theorem a 2004 paper by Wisewell, which proved that for a family of $n$-dimensional submanifolds $\Gamma(y, \omega) \subset \R^d$ parametrized by $y \in \R^p$, $\omega \in \R^q$ with $p \le d-n \le q$, for each $y$, we can choose $\omega_y$ so that $\bigcup_{y \in \R^p} \Gamma(y, \omega_y)$ has measure zero \cite{W04}\footnote{The author thanks Robert Fraser for suggesting this paper to him.}. Surprisingly, we can even choose the assignment $y \mapsto \omega_y$ independently of the family of the family of manifolds $\Gamma$. This is in many ways more general than the results on packing planes in this paper, but does not appear to yield an estimate on the number of $\de$ balls needed to cover the measure zero set and it does not yield compact sets or apply to Hausdorff measures of smaller dimension than $d$. 
        \item We prove in this paper that the Fourier extension operator for $n$-dimensional quadratic manifolds in $\R^d$ does not satisfy restricted weak-type $(p, 2d/n)$ bounds. As discussed previously, for this rules out restricted weak-type bounds at the restriction endpoint for many quadratic manifolds. One could ask if the same could be said for other classes of manifolds. Our approach does not seem to rule a restriced weak-type endpoint bound for $n$-dimensional quadratic manifolds in $\R^d$ where the restriction endpoint is greater than $2d/n$. If we could find an $\mathcal{N}$-Kakeya set in $\R^d$ with Minkowski dimension $<d$, when $\mathcal{N}$ is the normal space for $\mathcal{M}$, we could use the approach of this paper to disprove restricted weak-type restriction estimates above $2d/n$, but the existence of such sets seems unlikely. 
        
        There are some manifolds, for example, the moment curve, where the extension operator does satisfy weak-type bounds at the restriction endpoint. For most manifolds, however, the problem  of proving weak-type endpoint bounds remains open. Our proof requires certain geometric properties unique to quadratic manifolds or small perturbations thereof, so it would need significant modification to apply to most other manifolds. Specifically, it is important for in the proofs of both Theorem \ref{thm:kmf} and Theorem \ref{thm:hkak} that we can approximate $\mathcal{E}(\chi_{\eta, \de, v})$ with a rectangular prism $T_{\eta, \de, v}$ with only two side lengths, $\de^{-1}$ and $\de^{-2}$ and this is certainly not the case for general manifolds.
    \end{enumerate}

    \printbibliography

\end{document}